\def\blfootnote{\xdef\@thefnmark{}\@footnotetext}
\newtheorem{thm}{Theorem}[section]
\newtheorem{cor}[thm]{Corollary}
\newtheorem{lem}[thm]{Lemma}
\newtheorem{prob}[thm]{Problem}
\theoremstyle{definition}
\theoremstyle{remark}
\newfont{\eufm}{eufm10}
\newcommand{\e}{\varepsilon }
\renewcommand{\phi}{\varphi}
\renewcommand{\d }{{\rm d} }
\begin{document}

\title{On acylindrical hyperbolicity of groups with positive first $\ell^2$-Betti number}
\author{D. Osin}

\date{}
\maketitle

\begin{abstract}
We prove that every finitely presented group with positive first $\ell^2$-Betti number that surjects onto $\mathbb Z$ is acylindrically hyperbolic. In particular, this implies acylindrical hyperbolicity of groups of deficiency at least $2$.
\end{abstract}

\section{Introduction}
Recall that a group $G$ is \emph{acylindrically hyperbolic} if it admits a non-elementary acylindrical action on a hyperbolic space; in this context, the action is non-elementary if and only if $G$ is not virtually cyclic and has unbounded orbits \cite{Osi13}. The class of acylindrically hyperbolic groups is very broad and includes non-elementary hyperbolic and relatively hyperbolic groups, all but finitely many mapping class groups of orientable closed surfaced (possibly with punctures), $Out(F_n)$ for $n\ge 2$, most $3$-manifold groups, non-virtually cyclic groups acting properly on proper $CAT(0)$ spaces and containing rank-$1$ elements, and many other examples.

On the other hand, being acylindrically hyperbolic is a rather strong property. In particular, many aspects of the theory of hyperbolic and relatively hyperbolic groups can be generalized in the context of acylindrical hyperbolicity. These include various algebraic, model-theoretic, and analytic properties (see \cite{Osi13} for a more detailed survey), small cancellation theory \cite{Hull}, and group theoretic Dehn surgery \cite{DGO}.

The first $\ell^2$-Betti number, denoted $\beta_1^{(2)}(G)$, is a numerical invariant of $G$ ranging in the interval $[0,\infty ]$.
The study of $\ell^2$-Betti numbers of manifolds was started by Atiyah. The definition was extended to groups in the groundbreaking work of Cheeger and Gromov \cite{CG},  and since then the theory has been further developed by Gaboriau, L\"uck and others (see \cite{Gab,L} and references therein). This work and more recent papers of Peterson-Thom \cite{PT} and Chifan-Sinclair-Udrea \cite{CSU} show that groups with positive first $\ell^2$-Betti numbers share many algebraic and analytic properties with groups defined by various geometric ``negative curvature" type conditions.

For instance, if $G$ is a countable group with $\beta_1^{(2)}(G)>0$, then $G$ has finite amenable radical, is not a direct product of two infinite groups, is not boundedly generated, and is not inner amenable (hence the von Neumann algebra of $G$ does not have property $\Gamma$ of Murray and von Neumann) provided  all non-trivial conjugacy classes of $G$ are infinite; if, in addition, $G$ satisfies a weak version of the Atiyah Conjecture, then it contains non-cyclic free subgroups and its reduced $C^\ast$-algebra is simple and has unique trace \cite{CSU,PT}. All these properties, or their stronger analogues, are also known for acylindrically hyperbolic groups.

At first glance, this might seem to be a plain coincidence. However, it turns out that the classes of acylindrically hyperbolic groups and groups with positive first $\ell^2$-Betti numbers are related more closely than one might think.

\begin{thm}\label{main}
Let $G$ be a finitely presented group. Suppose that $\beta_1^{(2)}(G)>0$ and $G$ maps onto $\mathbb Z$. Then $G$ is acylindrically hyperbolic.
\end{thm}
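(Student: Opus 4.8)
The plan is to realize $G$ as a group acting on a tree in a non-elementary way and then invoke the criterion of Minasyan and Osin that detects acylindrical hyperbolicity from such actions. Since acylindrical hyperbolicity is preserved under passage to finite-index overgroups \cite{Osi13} and $\beta_1^{(2)}$ is multiplicative with respect to finite index \cite{L,Gab}, I would first reduce to the case where $G$ itself surjects onto $\mathbb Z$: replace $G$ by the finite-index subgroup $H$ that maps onto $\mathbb Z$, noting that $H$ is again finitely presented and satisfies $\beta_1^{(2)}(H)=[G:H]\,\beta_1^{(2)}(G)>0$, and recall that $G$ is acylindrically hyperbolic once $H$ is. Thus I fix an epimorphism $\phi\colon G\twoheadrightarrow\mathbb Z$ with kernel $N$.

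The first key step is to show that $N$ is not finitely generated, and this is where the hypothesis $\beta_1^{(2)}(G)>0$ enters. If $N$ were finitely generated it would be an infinite normal subgroup with $\beta_1^{(2)}(N)<\infty$ and infinite quotient $G/N\cong\mathbb Z$, which forces $\beta_1^{(2)}(G)=0$ (equivalently, $G$ would be a mapping torus of a finitely generated group, and such ``fibered'' groups have vanishing first $\ell^2$-Betti number) \cite{L,Gab}, contradicting our assumption.

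Next I would produce the tree action. By the theorem of Bieri and Strebel on finitely presented groups mapping onto $\mathbb Z$, the epimorphism $\phi$ induces a splitting of $G$ as an HNN extension $G=\langle B,t\mid t^{-1}C_1t=C_2\rangle$ with $B$ and the associated subgroups $C_1\cong C_2\cong C$ finitely generated; moreover this HNN extension is ascending if and only if $N$ is finitely generated. Since $N$ is infinitely generated, the extension is non-ascending, so $C_1\subsetneq B\supsetneq C_2$. Let $T$ be the associated Bass--Serre tree. Passing to the minimal invariant subtree, $G$ acts minimally on $T$; the stable letter $t$ acts hyperbolically, so the action has unbounded orbits and no global fixed point, and non-ascendingness guarantees that $G$ fixes no end of $T$. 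Finally $G$ is not virtually cyclic, since virtually cyclic groups are amenable and hence have $\beta_1^{(2)}=0$.

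It remains to verify the one arithmetic hypothesis of the Minasyan--Osin criterion, namely that some edge stabilizer is weakly malnormal: there should exist $g\in G$ with $|C\cap gCg^{-1}|<\infty$. This is the step I expect to be the main obstacle, and it is again driven by $\beta_1^{(2)}(G)>0$. If no such $g$ existed, then the finitely generated (hence $\beta_1^{(2)}$-finite) edge group $C$ would be infinite and $s$-normal in $G$, i.e. $C\cap gCg^{-1}$ would be infinite for every $g$; but an infinite $s$-normal subgroup with finite first $\ell^2$-Betti number forces $\beta_1^{(2)}(G)=0$ by the work of Peterson and Thom on $\ell^2$-cohomology and the ring of affiliated operators \cite{PT}, again a contradiction (and if $C$ is finite, weak malnormality is automatic). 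With weak malnormality in hand, the minimal, non-elementary action of the non-virtually-cyclic group $G$ on $T$ satisfies all hypotheses of the Minasyan--Osin theorem, which yields that $G$ is acylindrically hyperbolic. The delicate points to get right are the precise form of the $\ell^2$-cohomology vanishing statements for $s$-normal subgroups and for mapping tori, and checking that the Bieri--Strebel splitting has finitely generated edge groups and is non-ascending exactly when $N$ is infinitely generated.
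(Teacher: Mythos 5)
The last two steps of your plan (Peterson--Thom to deny $s$-normality of the edge group, then the Minasyan--Osin criterion) are sound and parallel the paper, but there is a genuine gap earlier, in the sentence ``this HNN extension is ascending if and only if $N$ is finitely generated.'' The direction you actually need --- $N$ not finitely generated implies the Bieri--Strebel splitting can be taken non-ascending --- is false. The Baumslag--Solitar group $BS(1,2)=\langle a,t\mid t^{-1}at=a^2\rangle$ is a strictly ascending HNN extension of $\mathbb Z$ whose kernel $N\cong\mathbb Z[1/2]$ is not finitely generated; moreover $BS(1,2)$ admits \emph{no} splitting with $C_1\subsetneq B\supsetneq C_2$ at all, since a non-ascending HNN extension contains a non-abelian free subgroup while $BS(1,2)$ is solvable. (The correct statement, via the Bieri--Neumann--Strebel invariant, is that $N$ is finitely generated if and only if the splitting can be taken ascending \emph{in both directions}, i.e.\ $C_1=B=C_2$.) So your argument does not rule out the case where every splitting induced by $\phi$ is strictly ascending, and in that case the properness hypothesis $C\ne A\ne D$ of Theorem \ref{MO} is never obtained, nor does $G$ fail to fix an end of the tree.

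This case cannot be dismissed with the tools you invoke: in a strictly ascending extension $G=\langle B,t\mid t^{-1}Bt=\psi(B)\rangle$ one computes $B^{g}\cap B=t^{n}\bigl((\psi^{m}(B))^{b}\cap\psi^{n}(B)\bigr)t^{-n}$ for $g=t^{m}bt^{-n}$, so if $\psi(B)$ is malnormal in $B$ the base is not $s$-normal, and the plain $s$-normal form of Peterson--Thom (Theorem \ref{PT}) gives nothing; one would need their stronger ``weakly $s$-normal'' version (which does cover bases of ascending HNN extensions and yields $\beta_1^{(2)}(G)=0$ in that case), a result you neither cite nor prove --- and which the paper explicitly says it avoids. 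Indeed, the paper's proof is engineered precisely to sidestep this issue, and does so without Bieri--Strebel or BNS theory: by Lemma \ref{HNN} the kernels $K_n$ of the maps onto $\mathbb Z/n\mathbb Z$ all split as HNN extensions with edge groups of rank at most a fixed $M$, while $\beta_1^{(2)}(K_n)=n\beta_1^{(2)}(H)\to\infty$; since $\beta_1^{(2)}\le d(\cdot)-1$, for large $n$ the base of the splitting of $K_n$ must have rank exceeding $M$, which \emph{forces} both associated subgroups to be proper. To repair your proof, either import the weak-$s$-normality result of Peterson--Thom (or the known vanishing of $\beta_1^{(2)}$ for ascending HNN extensions of finitely generated groups) to exclude the ascending case, or replace the Bieri--Strebel step by the paper's rank-growth argument.
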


By the L\"uck Approximation Theorem, every finitely presented residually finite group $G$ with $\beta_1^{(2)}(G)>0$ virtually maps onto $\mathbb Z$. Thus we obtain the following.

\begin{cor}\label{cor1}
Let $G$ be a finitely presented residually finite group with $\beta_1^{(2)}(G)>0$. Then $G$ is virtually acylindrically hyperbolic.
\end{cor}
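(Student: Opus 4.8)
The plan is to verify that $G$ satisfies the single extra hypothesis of Theorem~\ref{main} that is not already assumed, namely that some finite index subgroup of $G$ surjects onto $\mathbb{Z}$, and then to quote that theorem verbatim. Thus the entire content of the argument is the promised consequence of the L\"uck Approximation Theorem; once it is in place, there is nothing further to do.

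To carry this out, I would first fix a finite presentation complex $X$ for $G$ (a finite $2$-complex with $\pi_1(X)\cong G$), so that the first $\ell^2$-Betti number of $G$ coincides with that of the universal cover of $X$, this invariant being determined by the $2$-skeleton. Since $G$ is finitely generated and residually finite, I would next choose a descending chain $G=G_0\ge G_1\ge G_2\ge\cdots$ of finite index normal subgroups with $\bigcap_n G_n=\{1\}$, and let $X_n\to X$ denote the finite cover corresponding to $G_n$, so that $\pi_1(X_n)\cong G_n$. The L\"uck Approximation Theorem then identifies $\beta_1^{(2)}(G)$ with the limit
\[
\beta_1^{(2)}(G)=\lim_{n\to\infty}\frac{\dim_{\mathbb{Q}}H_1(X_n;\mathbb{Q})}{[G:G_n]},
\]
in which the numerator is the ordinary (rational) first Betti number of the finite cover $X_n$.

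Now the hypothesis $\beta_1^{(2)}(G)>0$ forces this limit to be positive, so that $\dim_{\mathbb{Q}}H_1(X_n;\mathbb{Q})\ge 1$ for all sufficiently large $n$. Fixing such an $n$ and using the identifications $H_1(X_n;\mathbb{Q})\cong H_1(G_n;\mathbb{Q})\cong G_n^{\mathrm{ab}}\otimes_{\mathbb{Z}}\mathbb{Q}$, I conclude that the abelianization of the finite index subgroup $G_n$ has positive rank, hence that $G_n$ admits a surjection onto $\mathbb{Z}$. This supplies exactly the missing hypothesis, and Theorem~\ref{main} applied to $G$ then yields that $G$ is acylindrically hyperbolic.

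Since Theorem~\ref{main} is granted, there is essentially no obstacle here; the only points requiring a little care are the passage from residual finiteness of the countable group $G$ to a chain of finite index normal subgroups with trivial intersection, and the justification that $\beta_1^{(2)}(G)$ is a genuine invariant of $G$ that may be computed on a finite presentation complex. Both are standard, so I expect the real weight of the corollary to lie entirely in the theorem that it specializes.
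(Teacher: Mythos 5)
Your proof is correct and follows essentially the same route as the paper: both invoke the L\"uck Approximation Theorem to find a finite index (normal) subgroup with positive ordinary first Betti number, hence a surjection onto $\mathbb Z$, and then apply Theorem \ref{main}. The paper states this in two sentences, using the group-theoretic formulation $b_1(G_i)=\dim(G_i/[G_i,G_i]\otimes\mathbb Q)$ directly rather than passing through presentation complexes, but the substance is identical.
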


Note that finite presentability of $G$ is essential here. Indeed there exist finitely generated (but not finitely presented) residually finite torsion groups with positive first $\ell^2$-Betti number \cite{LO}, while acylindrically hyperbolic groups are never torsion \cite{DGO}.

The next application of Theorem \ref{main} is to groups of positive deficiency.  It answers a question asked in \cite[Problem 8.3]{MO}. In the particular case of groups with one relation and at least $3$ generators this result was proved in \cite{MO}. Recall that a group $G$ has deficiency at least $k$ if there exists a finite presentation of $G$ with $k$ more generators that relations.

\begin{cor}\label{cor2}
Every finitely presented group of deficiency at least $2$ is acylindrically hyperbolic.
\end{cor}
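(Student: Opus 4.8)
The plan is to derive Corollary \ref{cor2} directly from Theorem \ref{main} by verifying its two hypotheses for a finitely presented group $G$ of deficiency at least $2$. Fix a finite presentation of $G$ with $g$ generators and $r$ relations satisfying $g - r \ge 2$. The abelianization $H_1(G;\mathbb Z)$ is the cokernel of the abelianized relation map $\mathbb Z^r \to \mathbb Z^g$, whose image is generated by $r$ elements and hence has rank at most $r$; therefore the free rank of $H_1(G;\mathbb Z)$ is at least $g - r \ge 2$. In particular $G$ surjects onto $\mathbb Z$ and is consequently infinite. This already supplies the second hypothesis of Theorem \ref{main}: we may take the required finite index subgroup to be $G$ itself.

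It remains to establish $\beta_1^{(2)}(G) > 0$, and here I would invoke the inequality of L\"uck relating deficiency to $\ell^2$-Betti numbers, namely
\[
\mathrm{def}(G) \le 1 - \beta_0^{(2)}(G) + \beta_1^{(2)}(G),
\]
valid for every finitely presented group (see \cite{L}). Since $G$ is infinite we have $\beta_0^{(2)}(G) = 0$, whence $\beta_1^{(2)}(G) \ge \mathrm{def}(G) - 1 \ge 1 > 0$. With both hypotheses now in place, Theorem \ref{main} applies and yields that $G$ is acylindrically hyperbolic.

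There is no genuine obstacle once Theorem \ref{main} is granted, since the two conditions it demands are precisely the standard low-dimensional consequences of positive deficiency. The only substantive external input is L\"uck's deficiency estimate, which I would cite rather than reprove; if a self-contained argument were preferred, the point to verify is that for the presentation $2$-complex $X$ with universal cover $\widetilde X$, the $\ell^2$-Euler characteristic $\beta_0^{(2)}(\widetilde X) - \beta_1^{(2)}(\widetilde X) + \beta_2^{(2)}(\widetilde X)$ equals the ordinary Euler characteristic $1 - g + r$. Using that $\beta_0^{(2)}$ and $\beta_1^{(2)}$ depend only on $\pi_1$ (adding cells of dimension $\ge 3$ to build a $K(G,1)$ leaves them unchanged), so that $\beta_i^{(2)}(\widetilde X) = \beta_i^{(2)}(G)$ for $i \le 1$, together with $\beta_2^{(2)}(\widetilde X) \ge 0$, rearranging the Euler characteristic identity gives $g - r \le 1 - \beta_0^{(2)}(G) + \beta_1^{(2)}(G)$, and taking the supremum over presentations yields the displayed inequality.
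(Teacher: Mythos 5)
Your proposal is correct and follows essentially the same route as the paper: the paper's proof likewise notes that $G$ surjects onto $\mathbb Z$ and invokes the bound $\beta_1^{(2)}(G)\ge \mathrm{def}(G)-1$ before applying Theorem \ref{main}. The only difference is that you spell out the standard facts the paper merely recalls (the abelianization rank count and L\"uck's deficiency inequality), which is fine but not a different argument.
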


Recall that, by the Baumslag-Pride theorem, every finitely presented group of deficiency at least $2$ is large, i.e., has a finite index subgroup that admits an epimorphism onto a non-cyclic free group \cite{BP}. However, this does not guarantee acylindrical hyperbolicity. For example, $F_2\times \mathbb Z$ is large, but not acylindrically hyperbolic.

We list here few immediate consequences of our main theorem. Note that all of them but (a) are new even for groups of deficiency $2$ and, in general, fail for large groups.

\begin{cor}\label{cor3}
Let $G$ be a finitely presented group with $\beta_1^{(2)}(G)>0$ that surjects onto $\mathbb Z$. Then the following hold.
\begin{enumerate}
\item[(a)] $G$ is $SQ$-universal.
\item[(b)] ${\rm dim} \,H_b^{(2)} (G, \ell^p(G))=\infty $ for all $1\le p<\infty$.
\end{enumerate}
If, in addition, $G$ has no non-trivial finite normal subgroups, then:
\begin{enumerate}
\item[(c)] (cf. \cite[Cor. 4.6]{PT}) The reduced $C^\ast$-algebra of $G$ is simple and has unique trace.
\item[(d)] $G$ does not satisfy any non-trivial mixed identity.
\end{enumerate}
\end{cor}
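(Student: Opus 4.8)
The plan is to derive all four properties as corollaries of the main theorem, using the general fact that acylindrical hyperbolicity is inherited nicely and that each listed property is already known to hold for acylindrically hyperbolic groups (possibly under a finite-normal-subgroup hypothesis). By Theorem~\ref{main}, the hypotheses of this corollary -- finite presentation, $\beta_1^{(2)}(G)>0$, and virtual surjection onto $\mathbb Z$ -- imply that $G$ is acylindrically hyperbolic. So the entire proof reduces to quoting, for each of (a)--(d), a result in the literature asserting that the stated property holds for every acylindrically hyperbolic group (under the appropriate assumptions). This is a ``clearinghouse'' corollary: no new argument is needed beyond invoking the main theorem and assembling known facts.

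For part (a), $SQ$-universality of acylindrically hyperbolic groups is established in the Dahmani--Guirardel--Osin framework \cite{DGO}; I would cite that directly. For part (b), the infinite-dimensionality of the second bounded cohomology of an acylindrically hyperbolic group with coefficients in $\ell^p(G)$ is known from the theory of hyperbolically embedded subgroups, again via \cite{DGO} (the classical case $p=2$ being the statement that such groups have infinite-dimensional $H_b^{(2)}(G,\ell^2(G))$); I would invoke the appropriate statement there. Parts (c) and (d) carry the extra hypothesis that $G$ has no non-trivial finite normal subgroups, which is precisely the condition ensuring that an acylindrically hyperbolic group has trivial amenable radical, or equivalently is not ``virtually trivial'' in the relevant sense. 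Under this hypothesis, $C^\ast$-simplicity and unique trace follow from known results on acylindrically hyperbolic groups (the parenthetical reference to \cite[Cor.~4.6]{PT} signals that the conclusion matches theirs, but our derivation goes through acylindrical hyperbolicity). For (d), the absence of non-trivial mixed identities is a known consequence of acylindrical hyperbolicity for groups with no finite normal subgroups.

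The only genuine subtlety -- and the single step I would flag as requiring care rather than obstacle -- is the role of the finite-normal-subgroup hypothesis in (c) and (d). An acylindrically hyperbolic group always has a unique maximal finite normal subgroup (its finite radical), and the cleanest forms of $C^\ast$-simplicity, unique trace, and the no-mixed-identity property require this radical to be trivial. I would verify that ``no non-trivial finite normal subgroups'' is exactly the hypothesis that trivializes the finite radical, so that the cited structural results apply verbatim. Once that identification is made explicit, (c) and (d) follow immediately.

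I do not expect any real obstacle here, since the mathematical content is entirely front-loaded into Theorem~\ref{main}; the corollary is a matter of correctly matching each property to its source in the acylindrical hyperbolicity literature and checking that the normal-subgroup hypotheses line up. The main thing to get right is the bookkeeping of references and the precise statements of the known results being quoted.
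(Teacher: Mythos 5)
Your proposal is correct and takes exactly the paper's approach: invoke Theorem~\ref{main} to conclude that $G$ is acylindrically hyperbolic, then quote known results for acylindrically hyperbolic groups for each of (a)--(d). The only differences are bibliographic bookkeeping --- the paper cites \cite[Theorems 2.33 and 2.35]{DGO} for (a) and (c), attributes (b) to Hamenst\"adt \cite{Ham} (via the discussion in \cite{Osi13}) rather than to \cite{DGO}, and cites \cite{HO15} for (d) --- none of which affects the argument.
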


Recall that $G$ is \emph{$SQ$-universal} if  every countable group embeds in a quotient of $G$. In particular, (a) implies that $G$ is ``algebraically large"; for example, it contains non-cyclic free subgroups and has uncountably many normal subgroups. By a \emph{mixed identity} of a group $G$ we mean any element $w\in G\ast \langle x_1, \ldots, x_k\rangle$ such that every homomorphism $G\ast \langle x_1, \ldots, x_k\rangle \to G$ that restricts to the identity map on $G$ takes $w$ to $1$. Not satisfying any non-trivial mixed identity is a rather restrictive conditions. It implies several interesting model-theoretic, geometric, and algebraic properties of $G$. For details see \cite{HO15}.

\section{Preliminaries}
In this section we collect results about $\ell^2$-Betti numbers and acylindrically hyperbolic groups used in the proof of Theorem \ref{main} and its corollaries.

An isometric action of a group $G$ on a metric space $S$ is called {\it acylindrical} if for every $\e>0$ there exist $R,N>0$ such that for every two points $x,y\in S$ satisfying $\d (x,y)\ge R$, there are at most $N$ elements $g\in G$ such that
$$
\d(x,gx)\le \e \;\;\; {\rm and}\;\;\; \d(y,gy) \le \e.
$$

By \cite[Theorem 1.2]{Osi13}, a group $G$ is acylindrically hyperbolic if and only if there exists a (possibly infinite) generating set $X$ of $G$ such that the Cayley graph of $G$ with respect to $X$, denoted $\Gamma (G,X)$, is hyperbolic, its Gromov boundary consists of more than $2$ points, and the action of $G$ on $\Gamma (G,X)$ is acylindrical.

For the definition of the first $\ell^2$-Betti number of a group we refer to  \cite{L}. Readers who are not familiar with this theory may think of the following L\"uck's Approximation Theorem as a definition in the particular case of finitely presented residually finite groups.

\begin{thm}[L\"uck]\label{Luck}
Let $G$ be a finitely presented group and let $G_1\ge G_2\ge \ldots $ be a chain of finite index normal subgroups of $G$ such that $\bigcap _{i=1}^\infty G_i=\{1\}$. Then $$\beta^{(2)}_1(G)=\lim \frac{b_1(G_i)}{[G:G_i]},$$ where $b_1(G_i)={\rm dim} (G_i/[G_i,G_i] \otimes \mathbb Q)$ is the ordinary first Betti number of $G_i$.
\end{thm}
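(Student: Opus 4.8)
The plan is to realize both sides of the claimed identity as the atom at $0$ of the spectral measure of a single integer group-ring matrix, and then to prove that these atoms converge. First I would fix a finite presentation of $G$ with $n$ generators and $r$ relators and pass to the associated presentation $2$-complex $X$, so that the cellular chain complex of the universal cover reads $(\mathbb Z G)^r \xrightarrow{A} (\mathbb Z G)^n \xrightarrow{\partial_1} \mathbb Z G$, where $A$ is the $n\times r$ matrix of Fox derivatives representing $\partial_2$. Completing to $\ell^2$ and computing von Neumann dimensions gives $\beta_1^{(2)}(G) = (n-1) - r + \dim_{\mathcal N(G)}\ker A$, since $\dim\overline{\mathrm{im}\,\partial_1}=1$ for infinite $G$. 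Writing $T = A^*A \in M_r(\mathcal N(G))$ and letting $\mu_T$ be its spectral measure for the trace $\mathrm{Tr}_r = \bigoplus_{j=1}^r\tau$, one has $\dim_{\mathcal N(G)}\ker A = \mu_T(\{0\})$. Running the same computation for the finite cover $X_i\to X$ of degree $m_i=[G:G_i]$, whose rational chain complex is obtained from $A$ via the regular representation of $G/G_i$, yields $b_1(G_i)/m_i = (n-1) - r + 1/m_i + \mu_{T_i}(\{0\})$, where $T_i$ is the image of $T$ over $\mathbb C[G/G_i]$ and $\mu_{T_i}(\{0\}) = (\dim_{\mathbb Q}\ker\bar A)/m_i$. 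Since $m_i\to\infty$, the theorem reduces to the single assertion $\mu_{T_i}(\{0\}) \to \mu_T(\{0\})$.

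Next I would establish weak convergence $\mu_{T_i}\to\mu_T$ by comparing moments. The $k$-th moment $\int\lambda^k\,d\mu_{T_i} = \mathrm{Tr}_r(T_i^k)$ is the trace of a fixed element of the integral group ring, namely the diagonal entry-sum of $(A^*A)^k$, which involves only finitely many group elements. Because $\bigcap_i G_i = \{1\}$, each non-trivial element occurring there survives in $G/G_i$ for all large $i$, so its trace contribution vanishes in the quotient exactly as it does in $\mathcal N(G)$; hence $\mathrm{Tr}_r(T_i^k) = \mathrm{Tr}_r(T^k)$ for all sufficiently large $i$. As the operator norms are bounded by the $\ell^1$-norm of the coefficients of $A$, all measures are supported on a common interval $[0,K]$, and convergence of all moments then forces weak convergence.

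The hard part will be controlling the atom at $0$, since weak convergence alone never rules out escape of mass to the origin; this is where integrality is indispensable. I would prove a uniform estimate $\mu_{T_i}((0,\lambda]) \le C/\log(1/\lambda)$ for $0<\lambda<1$, with $C$ independent of $i$. The point is that $T_i$, viewed through the regular representation, is a positive semidefinite matrix with integer entries, so the product of its nonzero eigenvalues is a positive integer and hence at least $1$; equivalently $\sum_{\lambda_j>0}\log\lambda_j \ge 0$. Splitting this sum at $\lambda$ and using the uniform upper bound $\lambda_j \le K$, the eigenvalues in $(0,\lambda]$ cannot be too numerous: each contributes $-\log\lambda_j \ge \log(1/\lambda)$, so their normalized count is $O(1/\log(1/\lambda))$, uniformly in $i$.

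Finally I would combine the two ingredients through a Portmanteau argument. Since $\{0\}$ is closed, weak convergence gives $\limsup_i\mu_{T_i}(\{0\}) \le \mu_T(\{0\})$. For the reverse inequality, the set $[0,\lambda)$ is open in $[0,\infty)$, so $\liminf_i\mu_{T_i}([0,\lambda)) \ge \mu_T([0,\lambda)) \ge \mu_T(\{0\})$; subtracting the uniform bound of the previous step yields $\liminf_i\mu_{T_i}(\{0\}) \ge \mu_T(\{0\}) - C/\log(1/\lambda)$, and letting $\lambda\to 0$ gives $\liminf_i\mu_{T_i}(\{0\}) \ge \mu_T(\{0\})$. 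Hence $\mu_{T_i}(\{0\})\to\mu_T(\{0\})$, which by the first step is exactly the desired approximation formula.
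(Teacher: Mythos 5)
The paper offers no proof of this statement: Theorem \ref{Luck} is quoted as a known result of L\"uck and used as a black box in the proof of Corollary \ref{cor1}, so there is nothing internal to compare your argument against. Judged on its own, your proposal is correct, and it is in fact a faithful reconstruction of L\"uck's original approximation argument: the reduction of both sides to the atom at $0$ of the spectral measure of the integral combinatorial Laplacian $T=A^*A$ coming from a fixed finite presentation (your Euler-characteristic bookkeeping $\beta_1^{(2)}(G)=(n-1)-r+\mu_T(\{0\})$ and $b_1(G_i)/m_i=(n-1)-r+1/m_i+\mu_{T_i}(\{0\})$ is right); weak convergence of the spectral measures via eventual equality of moments, which is exactly where the nestedness of the chain and $\bigcap_i G_i=\{1\}$ enter, through the finiteness of the support of $(A^*A)^k$; the determinant/integrality estimate $\mu_{T_i}\bigl((0,\lambda]\bigr)\le C/\log(1/\lambda)$ uniformly in $i$, which is L\"uck's key lemma and which you derive correctly from the fact that the product of the nonzero eigenvalues of a positive semidefinite integer matrix is a positive integer, hence at least $1$; and the Portmanteau sandwich, which is a clean repackaging of L\"uck's limsup/liminf estimates. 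Two points are left implicit but are harmless: you tacitly assume $G$ is infinite (needed both for $\dim_{\mathcal N(G)}\overline{\mathrm{im}\,\partial_1}=1$ and for $m_i\to\infty$; if $G$ is finite the chain forces $G_i=\{1\}$ eventually and both sides vanish), and you use the standard fact that $\beta_1^{(2)}(G)$ may be computed from the universal cover of the presentation $2$-complex, i.e.\ that the first $\ell^2$-Betti number depends only on the fundamental group. Neither affects correctness.
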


Recall that a subgroup $H$ of a group $G$ is \emph{$s$-normal} if the intersection $H^g\cap H$ is infinite for every $g\in G$. The following theorem was proved by Peterson and Thom in \cite[Theorem 5.12]{PT}. In fact, they prove this result under a slightly weaker assumption that $H$ is weakly $s$-normal, which we do not need (and do not explain) here. Note also that the assumption that $H$ is infinite in the formulation of \cite[Theorem 5.12]{PT} is redundant since $s$-normal subgroups are infinite by the definition.

\begin{thm}[Peterson-Thom]\label{PT}
Let $G$ be a countable group, $H$ an infinite index $s$-normal subgroup of $G$.  Suppose that $\beta^{(2)}_1(H)<\infty $ (e.g., this is the case if $H$ is finitely generated). Then $\beta^{(2)}_1(G)=0$.
\end{thm}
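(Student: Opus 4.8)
The plan is to argue by contradiction through the identification of the first $\ell^2$-Betti number with the von Neumann dimension of reduced first $\ell^2$-cohomology, $\beta^{(2)}_1(G) = \dim_{L(G)} \overline{H}^1(G;\ell^2 G)$. Assuming $\beta^{(2)}_1(G)>0$, I would produce a nonzero reduced class, represented by a $1$-cocycle $c:G\to\ell^2 G$ (so $c(gg')=c(g)+g\cdot c(g')$) that is not a limit of coboundaries $g\mapsto g\xi-\xi$. Following the Peterson--Thom machinery, such a cocycle is encoded by a closable real derivation $\delta:\mathbb C[G]\to\mathcal U(L(G))$ into the algebra of operators affiliated with the group von Neumann algebra, under which the inner derivations $a\mapsto \eta a-a\eta$ ($\eta\in\mathcal U(L(G))$) correspond exactly to the trivial reduced class. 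It therefore suffices to show that every such $\delta$ is inner.

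First I would restrict to $H$ and deploy the two quantitative hypotheses. Decomposing $\ell^2 G=\bigoplus_{Hc\in H\backslash G}\ell^2(Hc)$ as a left $H$-module, each summand is isomorphic to $\ell^2 H$, and there are $[G:H]=\infty$ of them, so $c|_H$ splits into countably many cocycles into copies of $\ell^2 H$ whose total energy $\|c(h)\|^2$ is finite for each $h$. The point is that $\beta^{(2)}_1(H)<\infty$ bounds the $L(H)$-dimension of the obstruction space $\overline{H}^1(H;\ell^2 H)$, while the infinitely many coset directions supplied by $[G:H]=\infty$ give enough room inside the large ambient algebra $\mathcal U(L(G))$ to solve the coboundary equation for $\delta|_{\mathbb C[H]}$. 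I would use this to conclude that $\delta|_{\mathbb C[H]}$ is inner, implemented by some $\eta\in\mathcal U(L(G))$; replacing $\delta$ by $\delta-(\eta(\cdot)-(\cdot)\eta)$, I may then assume $\delta(\lambda_h)=0$ for all $h\in H$.

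Next I would propagate this vanishing from $H$ to $G$ using $s$-normality, which is an elementary and self-contained computation. Fix $g\in G$, write $d_g=\delta(\lambda_g)$, and set $K=H\cap g^{-1}Hg$, which is infinite by hypothesis. From $\delta(\lambda_1)=0$ one gets $d_{g^{-1}}=-\lambda_{g^{-1}}d_g\lambda_{g^{-1}}$, and for $h\in K$ one has $ghg^{-1}\in H$, whence $d_h=d_{ghg^{-1}}=0$. Expanding $\delta(\lambda_g\lambda_h\lambda_{g^{-1}})=0$ with the Leibniz rule and simplifying gives, after multiplying on the right by $\lambda_g$, the intertwining relation $d_g\,\lambda_h=\lambda_{ghg^{-1}}\,d_g$ for every $h\in K$. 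Thus the affiliated operator $d_g$ intertwines the left regular representations of the infinite group $K$ and of its conjugate $gKg^{-1}$.

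The main obstacle is the concluding vanishing step: showing that an affiliated operator intertwining the regular representations of an infinite subgroup and a conjugate of it must be zero. Here I would pass to the polar decomposition of $d_g$ and run a Popa-style intertwining-by-bimodules argument inside $\mathcal U(L(G))$: a nonzero intertwiner would produce a nonzero $L(K)$--$L(gKg^{-1})$ bimodular partial isometry between corners of $L(G)$, and tracking the trace under the regular representation forces a contradiction with $K$ being infinite, so $d_g=0$ for all $g$ and $\delta$ is inner, contradicting $\beta^{(2)}_1(G)>0$. This is where infiniteness of $H^g\cap H$, that is $s$-normality rather than mere nontriviality, is indispensable. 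I expect the genuine analytic difficulty to reside in exactly two places: the trivialization of $\delta|_H$ in paragraph two, where $\beta^{(2)}_1(H)<\infty$ and $[G:H]=\infty$ must be combined (note that if $[G:H]<\infty$ this step fails, consistently with $\beta^{(2)}_1(H)=[G:H]\,\beta^{(2)}_1(G)$ being positive there), and the final intertwiner-vanishing estimate; the algebraic propagation sandwiched between them is routine.
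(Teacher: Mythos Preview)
The paper does not prove this theorem. It is quoted verbatim as a result of Peterson and Thom \cite[Theorem~5.12]{PT} and used as a black box in the proof of Theorem~\ref{main}; no argument for it is given or even sketched here. So there is no ``paper's own proof'' to compare your proposal against.

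For what it is worth, your outline follows the architecture of the original Peterson--Thom argument fairly closely: encode a nontrivial reduced $1$-cocycle as a closable derivation into the affiliated operators, use $\beta^{(2)}_1(H)<\infty$ together with $[G:H]=\infty$ to trivialize the restriction to $H$, and then propagate vanishing to all of $G$ via the Leibniz rule and $s$-normality. You are also right about where the real work lies. The algebraic propagation step you wrote out is essentially what Peterson--Thom do. The two places you flag as ``analytic difficulty'' are exactly the nontrivial inputs: the first (innerness on $H$) is handled in their paper through their machinery of $\mathrm{rank}$-completed cohomology in the ring of affiliated operators (not by an ad hoc coset-by-coset estimate as your wording suggests), and the second (vanishing of an intertwiner between regular representations of an infinite subgroup and a conjugate) is a short trace computation in $L(G)$ rather than a full Popa intertwining-by-bimodules argument. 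As written, your sketch is an honest plan but not a proof: both flagged steps are asserted rather than carried out.
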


We also need the following theorem, which provides a sufficient condition for an HNN-extension to be acylindrically hyperbolic.

\begin{thm}[Minasyan-Osin, {\cite[Crollary 2.2]{MO}}]\label{MO}
Let $G$ be an HNN-extension of a group $A$ with associated subgroups $C$ and $D$. Suppose that $C\ne A\ne D$ and there exists $g\in G$ such that $C^g\cap C$ is finite. Then $G$ is acylindrically hyperbolic.
\end{thm}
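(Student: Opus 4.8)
The plan is to exploit the natural action of $G$ on the Bass--Serre tree $T$ of the given HNN splitting and to show that this action is non-elementary and admits a loxodromic element satisfying the weak proper discontinuity (WPD) condition; acylindrical hyperbolicity then follows from the criterion of Osin and Dahmani--Guirardel--Osin underlying the characterization recalled above (see \cite{Osi13, DGO}), namely that a group which is not virtually cyclic and acts on a hyperbolic space with a loxodromic WPD element is acylindrically hyperbolic. Recall that $T$ has vertex set $G/A$ and edge set $G/C$, that the stabilizer of the edge $xC$ is $xCx^{-1}$, and that trees are $0$-hyperbolic. The hypothesis $C\ne A\ne D$ guarantees that every vertex has degree $[A:C]+[A:D]\ge 4$, so $T$ is a thick tree on which $G$ acts minimally and cocompactly. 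In particular the action does not fix a point, an end, or a pair of ends, so a Tits-type ping-pong argument produces two loxodromic elements with disjoint axes generating a free subgroup; hence $G$ is not virtually cyclic.

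The heart of the argument is to build a loxodromic WPD element out of the hypothesis that $C^g\cap C$ is finite for some $g$. First I would isolate a segment of $T$ with finite pointwise stabilizer: the edges $e_0=1\cdot C$ and $e_1=g^{-1}\cdot C$ have stabilizers $C$ and $g^{-1}Cg=C^g$ respectively, so the pointwise stabilizer of the geodesic segment $\sigma=[e_0,e_1]$ joining them is contained in $C\cap C^g$ and is therefore a finite group $K$. (If $e_0=e_1$ then $C=C^g$ is itself finite, the action already has finite edge stabilizers, and acylindrical hyperbolicity follows at once; so assume $e_0\ne e_1$.) Next I would place $\sigma$ on the axis of a loxodromic element: starting from a fixed loxodromic element such as the stable letter $t$ and an element carrying one end of $\sigma$ past the other, and passing to a high power, a ping-pong argument yields a loxodromic element $h$ whose axis $L$ contains a $\langle h\rangle$-fundamental sub-segment on which the pointwise stabilizer is a conjugate of $K$, hence finite.

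It then remains to verify the WPD inequality for $h$. Since $T$ is a simplicial tree, any element moving both endpoints of a segment by less than $1$ fixes that segment pointwise; and for $N$ large the segment $[x,h^Nx]\subseteq L$ contains a translate of the finite-stabilizer sub-segment, so its pointwise stabilizer lies in a conjugate of $K$ and is finite. Choosing $N=N(\varepsilon)$ accordingly gives precisely the WPD bound for $h$. As $G$ is not virtually cyclic, the criterion above applies and $G$ is acylindrically hyperbolic.

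The step I expect to be the main obstacle is the middle one: manufacturing a genuinely loxodromic element whose axis realizes the finite-stabilizer segment. One must ensure simultaneously that the constructed element is honestly hyperbolic on $T$ (with no fixed point or fixed end rather than elliptic), that the finite pointwise stabilizer persists along an entire fundamental domain of its axis so that arbitrarily long sub-segments of $L$ have uniformly finite pointwise stabilizer, and that the ping-pong estimates forcing loxodromicity are compatible with the chosen placement of $\sigma$. Controlling these requirements at once — rather than the comparatively routine verifications of non-elementarity and of the WPD inequality — is where the real work lies.
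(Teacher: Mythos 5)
Before comparing, note that this paper contains no proof of the statement at all: it is imported verbatim as \cite[Corollary 2.2]{MO} and used as a black box. So your proposal can only be measured against the proof in that cited source, and the strategy you outline — the Bass--Serre tree of the splitting, non-elementarity, a loxodromic WPD element, then the criterion that a non-virtually-cyclic group acting on a hyperbolic space with a loxodromic WPD element is acylindrically hyperbolic — is indeed the route taken there. However, as written your argument has two genuine gaps. The first is that the inference ``thick tree, minimal, cocompact $\Rightarrow$ no fixed end'' is simply false: the Bass--Serre tree of $BS(1,3)=\langle a,t\mid tat^{-1}=a^3\rangle$ is the $4$-regular tree, the action is vertex-transitive (hence minimal) and cocompact, yet the group fixes an end, being an ascending HNN extension. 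Vertex degree $\ge 4$ is not what rules out fixed ends; the two-sided hypothesis $C\ne A\ne D$ is. The correct argument: if $G$ fixed an end $\xi$, then $A$, which fixes the vertex $v$ with stabilizer $A$, would fix pointwise the ray from $v$ to $\xi$ and in particular some edge incident to $v$; the stabilizers of edges at $v$ are the $A$-conjugates of $C$ and of $D$, so $A$ would coincide with a conjugate of $C$ or $D$ inside $A$, forcing $A=C$ or $A=D$, a contradiction. Your conclusion is true, but the reason you give for it would not survive scrutiny.

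The second and more serious gap is the one you flag yourself: manufacturing a loxodromic element $h$ whose axis contains (a translate of) the finite-stabilizer segment $\sigma$ inside a single $\langle h\rangle$-fundamental domain \emph{is} the mathematical content of the theorem, and ``a ping-pong argument yields'' such an $h$ is a placeholder, not a proof. Carrying it out requires an explicit construction — typically $h$ is built as a product of the stable letter with elements of $A$ chosen, using $[A:C]\ge 2$ and $[A:D]\ge 2$, so that successive translates of $\sigma$ concatenate without backtracking, together with the standard criterion that an element $h$ with $[x,hx]\cup[hx,h^2x]$ geodesic is loxodromic with $[x,hx]$ on its axis. This is precisely where the work in \cite{MO} lies, and deferring it means the crux is missing. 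Relatedly, your WPD verification only treats $\varepsilon<1$: the observation that an isometry of a simplicial tree moving two vertices by less than $1$ fixes the segment between them says nothing for $\varepsilon\ge 1$. For arbitrary $\varepsilon$ you need the standard tree lemma that if $g$ displaces both $x$ and $y$ by at most $\varepsilon$ and $\d(x,y)>2\varepsilon$, then $g$ fixes pointwise the subsegment of $[x,y]$ at distance at least $\varepsilon$ from both endpoints, and then $N(\varepsilon)$ must be taken large enough that this middle portion still contains a translate of $\sigma$, whose pointwise stabilizer is a conjugate of the finite group $K$. Both gaps are repairable, but as submitted the proposal is an outline of the cited proof with its central construction absent and one supporting step incorrectly justified.
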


\section{Proofs}

Our proof of Theorem \ref{main} is based on the following elementary lemma.

\begin{lem}\label{HNN}
Let $G$ be a finitely presented group admitting an epimorphism $\e\colon G\to \mathbb Z$. Then $G$ is an HNN-extension of a finitely generated group with finitely generated associated subgroups.
\end{lem}

\begin{proof}
We fix a finite presentation
\begin{equation}\label{pres}
G=\langle t, a_1, \ldots, a_k\mid R_1, \ldots , R_m\rangle
\end{equation}
of the group $G$ such that $\e(t)$ generates $\mathbb Z$.
Note that the total sum of exponents of all occurrences of $t^{\pm 1}$ in each $R_i$ is $0$. Let $N$ be the maximal number of occurrences of $t$ in a relator in (\ref{pres}). Then every $R_i$ can be rewritten (possibly after conjugation by a suitable power of $t$) as a product of $b_{\alpha\beta}^{\pm 1}=t^\beta a_\alpha^{\pm 1} t^{-\beta}$, where the indices range as follows: $1\le \alpha\le k$, $0 \le \beta\le N$. Let $S_i$ be the word in the alphabet $B^{\pm 1}$, where
$$
B=\{b_{\alpha\beta}\mid 1\le \alpha\le k,\, 0 \le \beta\le N\},
$$
obtained from $R_i$ after such a rewriting. By using Tietze transformations, we can rewrite (\ref{pres}) in the form
$$
G=\langle t, B \mid S_1, \ldots, S_m, \mathcal T\rangle ,
$$
where $\mathcal T$ is the set of all relations of the form
$$tb_{\alpha, \beta}t^{-1} = b_{\alpha, \beta +1}$$
for $1\le \alpha\le k$ and $0 \le \beta\le N-1$.
In particular, $G$ is an HNN-extension of the subgroup generated by $B$ with finitely generated associated subgroups.
\end{proof}

\begin{proof}[Proof of Theorem \ref{main}]
Let $G$ be a group satisfying the assumptions of Theorem \ref{main}. By Lemma \ref{HNN}, $G$
splits as an HNN-extension of a finitely generated group $A$ with finitely generated associated  subgroups $C$, $D$ with a stable letter $t$.

We first observe that $C\ne A\ne D$. Indeed, assume $A=C$ or $A=D$. Let $\e\colon G\to \mathbb Z$ denote the natural homomorphism sending $t$ to a generator of $\mathbb Z$ and all elements of $A$ to $0$. Let $K_n$ denote the preimage of $n\mathbb Z$ under $\e$. Then $\beta^{(2)}_1(K_n)=n\beta_1^{(2)}(G)\to \infty $ as $n\to \infty$. On the other hand, it is straightforward to check that $K_n$ is generated by $A$ and $t^n$, which yields a uniform upper bound on the number of generators of $G$. Since the first $\ell^2$-Betti number of any group is bounded above by the number of generators minus $1$, we get a contradiction.

Thus $C\ne A\ne D$. By Theorem \ref{PT}, a countable group with positive first $\ell^2$-Betti number cannot contain finitely generated $s$-normal subgroups of infinite index. In particular, $C$ is not $s$-normal in $G$, i.e., there exists $g\in G$ such that $|g^{-1}Cg\cap C|<\infty$. Now applying Theorem \ref{MO} we conclude that $G$ is acylindrically hyperbolic.
\end{proof}

\begin{proof}[Proof of Corollary \ref{cor1}]
By Theorem \ref{Luck} every finitely presented residually finite group $G$ with $\beta_1^{(2)}(G)>0$ has a finite index subgroup that surjects onto $\mathbb Z$. Hence we can apply Theorem \ref{main}.
\end{proof}

\begin{proof}[Proof of Corollary \ref{cor2}]
Let $G$ be a group of deficiency $def(G)\ge 2$. Recall that $\beta_1^{(2)}(G)\ge def(G)-1$. Hence $G$ has positive first $\ell^2$-Betti number. Note also that $G$ obviously surjects onto $\mathbb Z$. It remains to apply Theorem \ref{main}.
\end{proof}

\begin{proof}[Proof of Corollary \ref{cor3}]
Parts (a)--(d) are well-known for acylindrically hyperbolic groups. For (a) and (c) see Theorem 2.33  and Theorem 2.35 in \cite{DGO}. Claim (b) is essentially due to Hamenst\" adt who proved it in different terms (see \cite{Ham} and the discussion before Theorem 8.3 in \cite{Osi13}). For (d) we refer to \cite{HO15}.
\end{proof}

We mention one more corollary. Recall that a group $G$ belongs to the class $\mathcal C_{reg}$ if $H_b^2(G, \ell^2(G))\ne 0$; further $G$ belongs to the class $\mathcal D_{reg}$ if $G$ is non-amenable and there exists an unbounded quasi-cocycle $G\to \ell^2(G)$. These classes were introduced by Monod-Shalom \cite{MS} and Thom \cite{T} in their work on rigidity of group actions. Currently no relation between these classes is known, although they seem likely to coincide. For a more detailed discussion we refer to \cite{T}.

It is well-known and fairly easy to see that every quasi-cocycle $G\to \ell^2(G)$ that is not a sum of a true cocycle and a bounded map gives rise to a non-trivial element of $H_b^2(G, \ell^2(G))$. Since $\beta_1^{(2)}(G)>0$ if and only if $G$ is non-amenable and there exists a non-zero cocycle $G\to \ell^2(G)$, we obtain the following alternative first proved by Thom \cite{T}: If $G\in \mathcal D_{reg}$, then either $G\in \mathcal C_{reg}$ or $\beta_1^{(2)} (G)>0$. From part (a) of the previous corollary and the L\"uck approximation, we immediately obtain the following.

\begin{cor}
Let $G\in \mathcal D_{reg}$ be a finitely presented group. Suppose that $G$ surjects onto $\mathbb Z$. Then $G\in \mathcal C_{reg}$.
\end{cor}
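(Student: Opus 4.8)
The plan is to read this off from Thom's dichotomy stated in the paragraph just above, feeding its output into Theorem \ref{main} (resp. Corollary \ref{cor1}) and then into part (b) of Corollary \ref{cor3}. Recall that the dichotomy asserts: if $G\in\mathcal D_{reg}$, then either $G\in\mathcal C_{reg}$ or $\beta_1^{(2)}(G)>0$. So I would open by splitting into exactly these two alternatives. In the first, there is nothing to do, since $G\in\mathcal C_{reg}$ is precisely the desired conclusion. The entire argument therefore reduces to the second alternative, in which $\beta_1^{(2)}(G)>0$.

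In that case I would use the standing hypothesis — that $G$ is finitely presented and either residually finite or virtually surjecting onto $\mathbb Z$ — to promote $G$ to an acylindrically hyperbolic group. If $G$ is residually finite, Corollary \ref{cor1} applies directly; internally this runs through L\"uck's Approximation Theorem \ref{Luck} to produce a finite-index subgroup mapping onto $\mathbb Z$, so in particular $G$ virtually surjects onto $\mathbb Z$. If instead $G$ virtually surjects onto $\mathbb Z$ by hypothesis, Theorem \ref{main} applies directly. Either way $G$ is acylindrically hyperbolic and virtually surjects onto $\mathbb Z$, so $G$ meets all the hypotheses of Corollary \ref{cor3}. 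I would then invoke part (b) of that corollary with $p=2$, giving $\dim H_b^2(G,\ell^2(G))=\infty$ and in particular $H_b^2(G,\ell^2(G))\neq 0$, which is exactly the statement $G\in\mathcal C_{reg}$. Combining the two alternatives finishes the proof.

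I do not expect a genuine obstacle here: all the substance is already contained in Thom's dichotomy, in Theorem \ref{main}, and in Corollary \ref{cor3}(b), so the corollary really is immediate. The one point that needs care is the bookkeeping of hypotheses in the second alternative, namely securing a virtual surjection onto $\mathbb Z$ before one is entitled to apply Corollary \ref{cor3}(b). In the ``virtually surjects onto $\mathbb Z$'' branch this is an assumption, while in the ``residually finite'' branch it is not assumed outright but extracted from $\beta_1^{(2)}(G)>0$ via L\"uck approximation, exactly as in the proof of Corollary \ref{cor1}. As long as that extraction is made explicit, the two branches align and both land in $\mathcal C_{reg}$.
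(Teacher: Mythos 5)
Your proof is correct and is essentially the paper's own argument spelled out: the paper disposes of the corollary in one line, citing Thom's dichotomy, L\"uck approximation, and Corollary \ref{cor3} (the paper's text says ``part (a)'' but this is evidently a typo for part (b), which is the part you correctly invoke with $p=2$). The only cosmetic difference is that you explicitly route through Theorem \ref{main}/Corollary \ref{cor1} to establish acylindrical hyperbolicity, which is redundant since Corollary \ref{cor3} already does this internally once its hypotheses (positive first $\ell^2$-Betti number and a virtual surjection onto $\mathbb Z$, the latter extracted via L\"uck approximation in the residually finite case) are verified.
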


We conclude with two open questions.
\begin{prob}
Is every finitely presented group $G$ with positive first $\ell^2$-Betti number acylindrically hyperbolic?
\end{prob}

Note that there exist finitely presented groups with positive first $\ell^2$-Betti numbers that do not even virtually surject on $\mathbb Z$ (e.g., one can take $G=Q\ast Q$, where $Q$ is a finitely presented infinite simple group). Thus there is no way to reduce the general case of this problem to Theorem \ref{main}.

The second question is motivated by the fact that the use of $\ell^2$-Betti numbers and the Peterson-Thom theorem in our proof is unavoidable, even if we only want to prove our main theorem for groups of deficiency at least $2$. It would be interesting to find an alternative (group theoretic) proof in this particular case. The main difficulty here is the following.

\begin{prob}
Find a group theoretic argument showing that every finitely generated $s$-normal subgroup of a finitely presented group of deficiency at least $2$ is of finite index.
\end{prob}

\vspace{1cm}

\noindent {\bf D. Osin:  } Stevenson Center 1326, Department of Mathematics, Vanderbilt University, Nashville, TN 37240, USA\\
email: {\it denis.osin@gmail.com}
\end{document}